\newcommand{\F}{\ensuremath{{\mathbb{F}}}}
\newcommand{\R}{\ensuremath{{\mathbb{R}}}}
\newcommand{\Z}{\ensuremath{{\mathbb{Z}}}}
\newcommand{\lv}{\ensuremath{\left\vert}}
\newcommand{\rv}{\ensuremath{\right\vert}}
\newcommand{\lp}{\ensuremath{\left(}}
\newcommand{\rp}{\ensuremath{\right)}}
\newcommand{\lb}{\ensuremath{\left\{}}
\newcommand{\rb}{\ensuremath{\right\}}}
\DeclareMathOperator{\coker}{coker}
\DeclareMathOperator{\Surj}{Surj}
\DeclareMathOperator{\Inj}{Inj}
\DeclareMathOperator{\Aut}{Aut}
\DeclareMathOperator{\sub}{sub}
\DeclareMathOperator{\rank}{rank}
\newtheorem*{rep@theorem}{\rep@title}
\newcommand{\newreptheorem}[2]{%
\newenvironment{rep#1}[1]{%
 \def\rep@title{\hyperref[##1]{#2 \ref*{##1}}}%
 \begin{rep@theorem}}%
 {\end{rep@theorem}}}
\theoremstyle{definition}
\newtheorem{Not21}{Definition}[section]
\newtheorem{Not22}[Not21]{Notation}
\newtheorem{Not23}[Not21]{Notation}
\theoremstyle{plain}
\newtheorem{final}[Not21]{Corollary}
\newtheorem{notmine}[Not21]{Theorem}
\newtheorem{trivialyes}[Not21]{Corollary}
\newtheorem{inj}[Not21]{Proposition}
\newtheorem{subg}[Not21]{Lemma}
\newtheorem{subg2}[Not21]{Lemma}
\newtheorem{subg2.5}[Not21]{Lemma}
\newtheorem{subg3}[Not21]{Theorem}
\newtheorem{temp}[Not21]{Theorem}
\theoremstyle{remark}
\newtheorem{amalgam}[Not21]{Remark}
\newtheorem{surjinj}[Not21]{Remark}
\title{A weighted M\"{o}bius function}
\author{Derek Garton}
\date{\today}
\begin{document}
\maketitle

\section{Introduction} \label{introduction}

Fix an odd prime $\ell$ and let $\mathcal{G}$ be the poset of isomorphism classes of finite abelian $\ell$-groups, with the relation $\left[A\right]\leq\left[B\right]$ if and only if there exists an injective group homomorphism $A\hookrightarrow B$.
(For notational simplicity, from this point forward we will conflate finite abelian $\ell$-groups and the equivalence classes containing them.)
In 1984, Cohen and Lenstra~\cite{CL} proved that the function\begin{align*}
\nu:\mathcal{G}&\to\R^{\geq0}\\
A&\mapsto\lv\Aut{A}\rv^{-1}\prod_{i=1}^\infty{\lp1-\ell^{-i}\rp}
\end{align*}
is a discrete probability distribution on $\mathcal{G}$.
(This fact had already been proved by Hall in~\cite{Hall}, who used a different method).
They then conjectured that if $A\in\mathcal{G}$, then $\nu(A)$ is the probability that the $\ell$-Sylow subgroup of the ideal class group of an imaginary quadratic number field is isomorphic to $A$.
Since then, mathematicians have defined various probability distributions on $\mathcal{G}$ and conjectured that these distributions describe various phenomena, both number-theoretic (e.g., \cite{FW}, \cite{CM}, \cite{EVW}, \cite{Mal}, \cite{G}) and combinatorial (e.g., \cite{MMW}, \cite{CKLPM}).

Given any discrete probability distribution $\xi:\mathcal{G}\to\R^{\geq0}$ and any $A\in\mathcal{G}$, define the \emph{$A$th moment of $\xi$} to be
\[
\sum_{B\in\mathcal{G}}{\lv\Surj{(B,A)}\rv\xi(B)},
\]
where for any $B,A\in\mathcal{G}$, we define $\Surj{(B,A)}$ to be the set of surjective group homomorphisms from $B$ to $A$.
This terminology, which is becoming standard in the literature related to the Cohen-Lenstra heuristics (see, for example,~\cite{EVW} and~\cite{MMW}), is meant to evoke an analogy with the $k$th moment of a real-valued random variable $X$: just as the $k$th moment of $X$ is the expected value of $X^k$, the $A$th moment of $\xi$ is the expected value of $\lv\Surj{\lp B,A\rp}\rv$, where $B$ is a $\mathcal{G}$-valued random variable.
Moreover, under certain favorable conditions, the set of $A$th moments of a distribution on $\mathcal{G}$ determines the distribution, making the analogy even stronger.

A precise description of these ``favorable conditions", however, is still elusive.
In~\cite{EVW}, \cite{MMW}, and \cite{G}, for example, the moments of the particular discrete probability distributions on $\mathcal{G}$ in question completely determine the distribution.
In~\cite{G}, a M\"{o}bius inversion-type procedure transforms closed formulas for moments of certain distributions on $\mathcal{G}$ into closed formulas for the distribution itself.
Some natural questions are:
\begin{itemize}
\item what is this M\"{o}bius-type function?,
\item in what ways does it behave like the classical M\"{o}bius function?, and
\item in what conditions can it transform formulas for moments into formulas for distributions?
\end{itemize}

In this paper, we focus on the first two questions, leaving the third for later work.
In \hyperref[defs]{Section~\ref*{defs}}, we begin by addressing the first question.
That is, we define this new M\"{o}bius-type function associated to the poset $\mathcal{G}$, which we denote $S:\mathcal{G}\times\mathcal{G}\to\Z$.
We also compare it to the case of the poset of subgroups of a group $G$, which we denote $\mathcal{P}_G$, and its associated M\"obius function, which we denote $\mu_G:\mathcal{P}_G\times\mathcal{P}_G\to\Z$.
In particular, we state a result relating these two functions; see \hyperref[amalgam]{Remark~\ref*{amalgam}}.
We then state the main results of the paper, Theorems~\ref{temp} and~\ref{subg3}, which we prove in \hyperref[Proofs]{Section~\ref*{Proofs}}.
As an example application of Theorems~\ref{temp} and~\ref{subg3}, we remark that they immediately imply:
\begin{repcorollary}{final}
If $A,C\in\mathcal{G}$, then $S(A,C)=0$ unless there exists an injection $\iota:A\hookrightarrow B$ with $\coker{(\iota)}$ elementary abelian.
\end{repcorollary}
\noindent We would like to note the analogy between \hyperref[final]{Corollary~\ref*{final}} and Hall's result from 1934~\cite{Hall2}: if $G$ is an $\ell$-group of order $\ell^n$, then $\mu_G(1,G)=0$ unless $G$ is elementary abelian, in which case $\mu_G(1,G)=(-1)^n\ell^{\binom{n}{2}}$.
In addition to implying \hyperref[final]{Corollary~\ref*{final}}, Theorems~\ref{temp} and~\ref{subg3} are both integral to the inversion procedure deployed in~\cite{G}, and will be a useful tool in answering the third question mentioned above.
In~\cite{Gab}, we explore further properties of $S$, using it to expand on Cohen-Lenstra's identities on finite abelian $\ell$-groups~\cite{CL}.
Moreover, \hyperref[final]{Corollary~\ref*{final}} has applications to recent work in group theory: see Lucchini's \hyperref[notmine]{Theorem~\ref*{notmine}}~\cite{Luc}, below, and the discussion following it.

\section{Definitions and results} \label{defs}

Let $\mathcal{P}$ be a locally finite poset.
The \emph{M\"{o}bius function} on $\mathcal{P}$, denoted by $\mu_\mathcal{P}$, is defined by the following criteria: for any $x,z\in\mathcal{P}$,
\begin{align*}
\hspace{73px}&\mu_\mathcal{P}\lp x,z\rp&&\hspace{-83px}=0&&
\hspace{-73px}\text{ if }x\nleq z,\\
\hspace{73px}&\mu_\mathcal{P}\lp x,z\rp&&\hspace{-83px}=1&&
\hspace{-73px}\text{ if }x=z,\\
\hspace{73px}\sum_{x\leq y\leq z}
&{\mu_\mathcal{P}\lp x,y\rp}&&\hspace{-83px}=0&&
\hspace{-73px}\text{ if }x<z.
\end{align*}
A classic reference for M\"{o}bius functions is~\cite{Rota}.
Now, for any finite group $G$, let $\mathcal{P}_G$ be the poset of subgroups of $G$ ordered by inclusion.
(To ease notation, let $\mu_G$ be the M\"{o}bius function on this poset.)
For a history of the work on the M\"{o}bius function on this particular poset, see~\cite{HIO}.
Recall that $\mathcal{G}$ is the poset of isomorphism classes of finite abelian $\ell$-groups.

\begin{Not21} \label{Not21}
For any $A,C\in\mathcal{G}$, let $\sub{(A,C)}$ be the number of subgroups of $C$ that are isomorphic to $A$.
If $A\in\mathcal{G}$, an \emph{$A$-chain} is a finite linearly ordered subset of $\lb B\in\mathcal{G}\mid B>A\rb$.
Now, given an $A$-chain $\mathfrak{C}=\lb A_j\rb_{j=1}^i$, define
\[
\sub{\lp\mathfrak{C}\rp}
:=(-1)^i\sub{\lp A,A_1\rp}\prod_{j=1}^{i-1}{\sub{\lp A_j,A_{j+1}\rp}}.
\]
Finally, for any $A,C\in\mathcal{G}$, let
\[
S(A,C)
=\begin{cases}
\hspace{18px}0&\text{if }A\nleq C,\\
\hspace{18px}1&\text{if }A=C,\\
\displaystyle{
\sum_{\substack{A\text{-chains }\mathfrak{C},\\
\max{\mathfrak{C}}=C}}
{\sub{\lp\mathfrak{C}\rp}}}&\text{if } A<C.
\end{cases}
\]
\end{Not21}

\begin{amalgam} \label{amalgam}
Though $S$ is defined on the poset $\mathcal{G}$, it is closely related to the classical work on the M\"{o}bius function on the poset of subgroups of a fixed group.
Indeed, by applying Lemma~2.2 of~\cite{HIO}, we see that
\[
S(A,C)
=\sum_{\substack{B\leq C
\\B\simeq A}}
{\mu_C(B,C)}.
\]
\end{amalgam}
%

There has been recent progress towards describing groups with non-zero M\"{o}bius functions.
For example, in 2007 Lucchini~\cite{Luc} proved the following:

\begin{notmine} \label{notmine}
Assume that $G$ is a finite solvable group and that $H$ is a proper subgroup of $G$ with $\mu_G(1,H)\neq0$.
Then there exists a family $M_1,\ldots,M_t$ of maximal subgroups of $G$ such that
\begin{itemize}
\item $H=M_1\cap\cdots\cap M_t$, and
\item $\left[G:H\right]=\left[G:M_1\right]\cdots\left[G:M_t\right]$.
\end{itemize}
\end{notmine}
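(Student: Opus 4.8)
The plan is to separate the two conclusions. Throughout, the operative quantity is the Möbius number $\mu_G(H,G)$ of the interval $[H,G]$ in $\mathcal{P}_G$, which the hypothesis asserts is nonzero. The first conclusion, that $H$ is an intersection of maximal subgroups, is formal: it is a general fact about finite lattices (see~\cite{Rota}) that if the Möbius number of a bounded lattice is nonzero, then its bottom element is the meet of its coatoms. The coatoms of $[H,G]$ are precisely the maximal subgroups of $G$ containing $H$, so $H$ is the intersection of the maximal subgroups of $G$ that contain it. All of the real work lies in producing a (possibly smaller) family with the multiplicative index property, and this is where solvability enters.

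For the multiplicative index I would induct on $|G|$. First I would reduce to the case that $H$ is core-free: replacing $G$ by $G/\mathrm{core}_G(H)$ identifies $[H,G]$ with the corresponding interval in the quotient, leaves $\mu_G(H,G)$ unchanged, and matches up maximal subgroups together with their indices. Now let $N$ be a minimal normal subgroup of $G$; since $G$ is solvable, $N$ is elementary abelian, and since $H$ is core-free, $N\nleq H$. Because $H$ is the intersection of the maximal subgroups containing it, not all of these can contain $N$, so some maximal subgroup $M\geq H$ satisfies $MN=G$; minimality of $N$ then forces $M\cap N=1$, so that $M$ is a complement to $N$ with $[G:M]=|N|$ and $M\cong G/N$.

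Fix $M_1:=M$. Granting for the moment that $\mu_M(H,M)\neq0$ (addressed below), I would apply the inductive hypothesis to the pair $(M,H)$ to obtain maximal subgroups $M_2,\dots,M_t$ of $M$ with $H=M_2\cap\cdots\cap M_t$ and $[M:H]=\prod_{i\geq2}[M:M_i]$. Each $M_i$ then lifts to a maximal subgroup $\widetilde M_i:=M_iN$ of $G$: since $N$ is normal, any subgroup $L$ with $M_iN\leq L\leq G$ has the form $N(L\cap M)$ with $M_i\leq L\cap M\leq M$, so maximality of $M_i$ in $M$ forces $\widetilde M_i$ to be maximal in $G$, with $[G:\widetilde M_i]=[M:M_i]$ and $M\cap\widetilde M_i=M_i$ by Dedekind's law. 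Intersecting, $\widetilde M_1\cap\cdots\cap\widetilde M_t=M\cap\bigcap_{i\geq2}\widetilde M_i=\bigcap_{i\geq2}(M\cap\widetilde M_i)=\bigcap_{i\geq2}M_i=H$, while $\prod_{i=1}^t[G:\widetilde M_i]=|N|\prod_{i\geq2}[M:M_i]=[G:M][M:H]=[G:H]$, which is exactly what is required.

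The one step that carries the real weight is verifying that the inductive hypothesis applies to $(M,H)$, i.e. that $\mu_M(H,M)\neq0$. Since $M\cong G/N$ and $H\cap N=1$, this is equivalent to $\mu_{G/N}(HN/N,G/N)\neq0$, so the task is to show that nonvanishing of the Möbius number propagates across the minimal normal subgroup $N$: one needs a recursion expressing $\mu_G(H,G)$ as a nonzero integer multiple of $\mu_{G/N}(HN/N,G/N)$, the multiplier being positive precisely because $N$ admits a complement over $H$. Establishing this recursion is the crux of the argument, and it is exactly here that solvability is indispensable, guaranteeing that $N$ is abelian and (via the hypothesis) complemented. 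The requisite machinery is the analysis of the Möbius function under passage to a quotient by a minimal normal subgroup, as developed in~\cite{HIO}; the complete argument is carried out in~\cite{Luc}.
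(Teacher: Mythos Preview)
The paper does not prove this statement at all: it is quoted verbatim as a result of Lucchini~\cite{Luc} and used only as context and motivation for \hyperref[final]{Corollary~\ref*{final}} and \hyperref[trivialyes]{Corollary~\ref*{trivialyes}}. There is therefore no proof in the paper to compare your proposal against.

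Two further remarks on your write-up itself. First, you silently replace the hypothesis $\mu_G(1,H)\neq 0$ by $\mu_G(H,G)\neq 0$; these are genuinely different quantities, and your argument (coatoms of $[H,G]$, etc.)\ is built around the latter. Whether this reflects a misprint in the quoted statement or a misreading on your part, it needs to be addressed explicitly rather than elided. Second, your outline is not a proof but a roadmap: you correctly identify the crux---that nonvanishing of the M\"obius number must propagate through the quotient by a complemented minimal normal subgroup---and then defer exactly that step back to~\cite{Luc}. So as written, the proposal is circular as a self-contained argument; it would be more honest to label it a sketch and say plainly that the key recursion is Lucchini's contribution.
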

\noindent In the light of \hyperref[amalgam]{Remark~\ref*{amalgam}}, \hyperref[final]{Corollary~\ref*{final}} implies that there exists an infinite family of pairs of finite abelian $\ell$-groups with trivial M\"{o}bius function:
\begin{repcorollary}{trivialyes}
If $A,C\in\mathcal{G}$ and $C$ has exactly one subgroup isomorphic to $A$, then $\mu_C(A,C)=0$ unless there exists some $\iota:A\hookrightarrow C$ with $\coker{(i)}$ elementary abelian.
\end{repcorollary}

In \hyperref[Proofs]{Section~\ref*{Proofs}}, below, we prove the main results of this paper, mentioned in \hyperref[introduction]{Section~\ref*{introduction}}.
(See \hyperref[Not23]{Notation~\ref*{Not23}} for the definition of rank.)

\begin{reptheorem}{temp}
Suppose that $A,C\in\mathcal{G}$ and $\rank{A}<\rank{C}$.
If there exists $k\in\Z^{>0}$ and $B\in\mathcal{G}$ such that $A\leq B<C$, $\rank{B}=\rank{A}$, and
\[
B\oplus\overbrace{\lp\Z/\ell\rp\oplus\cdots\oplus\lp\Z/\ell\rp}^{k\text{ times}}=C,
\]
then $S(A,C)=S(A,B)\cdot S(B,C)$.
Otherwise, $S(A,C)=0$.
\end{reptheorem}

\begin{reptheorem}{subg3}
Suppose that $A,C\in\mathcal{G}$, that $\rank{A}=\rank{C}=r$, and that there does not exist an injection $\iota:A\hookrightarrow C$ such that $\coker{(\iota)}$ is elementary abelian.
Then $S(A,C)=0$.
\end{reptheorem}

\section{Proofs of main results} \label{Proofs}

The combinatorics of the proofs that follow will rely on Lemmas~\ref{subg} to~\ref{subg2.5}, which follow immediately from \hyperref[inj]{Proposition~\ref*{inj}} below.
There are many descriptions of the quantity described in \hyperref[inj]{Proposition~\ref*{inj}}; one such can be found in Theorem~8 in a recent paper of Delaunay and Jouhet~\cite{DJ}.
The formula we present below is different than theirs; hopefully the ease with which it implies Lemmas~\ref{subg} to~\ref{subg2.5} makes up for its unwieldiness.
Before we begin, we introduce some notation.

\begin{Not22} \label{Not22}
Suppose $A\in\mathcal{G}$.
Let $\Lambda\lp A\rp$ be the set of alternating bilinear forms on $A$, with $A$ thought of as a $\lp\Z/\exp{(A)}\rp$-module.
Next, for any $A,B\in\mathcal{G}$, let $\Inj{\lp A,B\rp}$ be the set of injective group homomorphisms from $A$ into $B$.
\end{Not22}

\begin{surjinj}\label{surjinj}
In \hyperref[introduction]{Section~\ref*{introduction}}, we defined moments in terms of surjections, which is standard, but there is an equivalent definition given in terms of injections; see Section~3 of~\cite{G} for more details.
\end{surjinj}


\begin{inj} \label{inj}
Suppose $A=\bigoplus_{i=1}^r{\Z/\ell^{a_i}}$ and $B=\bigoplus_{i=1}^{r^\prime}{\Z/\ell^{b_i}}$, with $a_i\geq a_j$ and $b_i\geq b_j$ for $i\leq j$.
Then
\[
\lv\Inj{\lp A,B\rp}\rv
=\lv\Lambda(A)\rv
\cdot\prod_{i=1}^r{\lp\ell^{\sum_{j=i}^{r^{\prime}}{\min{\{a_i,b_j\}}}}
-\ell^{\sum_{j=i}^{r^{\prime}}{\min{\{a_i-1,b_j\}}}}\rp},
\]
so
\[
\sub{\lp A,B\rp}
=\prod_{i=1}^{r}{\lp\frac{\ell^{\sum_{j=i}^{r^{\prime}}{\min{\{a_i,b_j\}}}}
-\ell^{\sum_{j=i}^{r^{\prime}}{\min{\{a_i-1,b_j\}}}}}
{\ell^{\sum_{j=i}^{r}{a_j}}
-\ell^{\sum_{j=i}^{r}{\min{\{a_i-1,a_j\}}}}}\rp}.
\]
\end{inj}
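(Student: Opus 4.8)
The plan is to prove the formula for $\lv\Inj{(A,B)}\rv$ directly by enumeration, and then to deduce the formula for $\sub{(A,B)}$ from the elementary identity $\lv\Inj{(A,B)}\rv=\lv\Aut{A}\rv\cdot\sub{(A,B)}$: every injection $A\hookrightarrow B$ has as image a subgroup of $B$ isomorphic to $A$, and exactly $\lv\Aut{A}\rv=\lv\Inj{(A,A)}\rv$ of them realize each such subgroup. Write $A=\bigoplus_{i=1}^r\Z/\ell^{a_i}e_i$ with $a_1\geq\cdots\geq a_r$, set $A_i=\la e_1,\ldots,e_i\ra$, and for a homomorphism $\phi\colon A\to B$ write $v_i=\phi(e_i)$ and $w_i=\ell^{a_i-1}v_i\in B[\ell]$. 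The starting point is the standard observation that every nontrivial subgroup of $A$ meets the socle $A[\ell]$, so $\phi$ is injective if and only if $\phi|_{A[\ell]}$ is, that is, if and only if $w_1,\ldots,w_r$ are $\F_\ell$-linearly independent. Thus an injection $A\hookrightarrow B$ is precisely a tuple $(v_1,\ldots,v_r)$ with $\ell^{a_i}v_i=0$ and $w_i\notin\Span_{\F_\ell}(w_1,\ldots,w_{i-1})$ for each $i$.

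I would enumerate these tuples greedily. Suppose valid $v_1,\ldots,v_{i-1}$ have been chosen, so that $\phi_{i-1}\colon A_{i-1}\hookrightarrow B$ is injective and $\Span_{\F_\ell}(w_1,\ldots,w_{i-1})$ is $(i-1)$-dimensional. The homomorphism $v\mapsto\ell^{a_i-1}v$ maps $B[\ell^{a_i}]$ onto $\ell^{a_i-1}B\cap B[\ell]$ with all fibres of size $\lv B[\ell^{a_i-1}]\rv=\ell^{\sum_{j=1}^{r'}\min\{a_i-1,b_j\}}$, and—because $a_k\geq a_i$ for $k<i$—the subspace $\Span_{\F_\ell}(w_1,\ldots,w_{i-1})$ is contained in $\ell^{a_i-1}B\cap B[\ell]$. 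Hence the number of admissible $v_i$ is
\[
\lv B[\ell^{a_i}]\rv-\ell^{i-1}\lv B[\ell^{a_i-1}]\rv
=\ell^{\sum_{j=1}^{r'}\min\{a_i-1,b_j\}}\Bigl(\ell^{\#\{\,j:b_j\geq a_i\,\}}-\ell^{i-1}\Bigr),
\]
and this number is independent of the earlier choices. Multiplying over $i=1,\ldots,r$ therefore produces $\lv\Inj{(A,B)}\rv$, and the product vanishes exactly when $\Inj{(A,B)}=\emptyset$ (equivalently, when $a_i>b_i$ for some $i$, with $b_i:=0$ for $i>r'$).

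It remains to reshape this product into the asserted form. Writing $\sum_{j=1}^{r'}=\sum_{j=1}^{i-1}+\sum_{j=i}^{r'}$ and using that $b_j\geq a_j\geq a_i$ for $j<i$ whenever the $i$th factor is nonzero, one finds $\sum_{j=1}^{i-1}\min\{a_i-1,b_j\}=(i-1)(a_i-1)$ and $\#\{\,j:b_j\geq a_i\,\}-(i-1)=\sum_{j=i}^{r'}(\min\{a_i,b_j\}-\min\{a_i-1,b_j\})$, so the $i$th factor equals $\bigl(\ell^{\sum_{j=i}^{r'}\min\{a_i,b_j\}}-\ell^{\sum_{j=i}^{r'}\min\{a_i-1,b_j\}}\bigr)\,\ell^{(i-1)a_i}$. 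Finally $\prod_{i=1}^r\ell^{(i-1)a_i}=\prod_{1\leq i<j\leq r}\ell^{\min\{a_i,a_j\}}=\lv\Lambda(A)\rv$, because (as $\ell$ is odd) an alternating bilinear form on $A$ is the same as an antisymmetric one, hence is freely and uniquely prescribed by its values $\la e_i,e_j\ra\in(\Z/\exp(A))[\ell^{\min\{a_i,a_j\}}]$ on the pairs $i<j$. This yields the first displayed identity. The second follows by specializing it to $B=A$—so $r'=r$, $b_j=a_j$, and $\min\{a_i,a_j\}=a_j$ for $j\geq i$—which gives $\lv\Aut{A}\rv=\lv\Inj{(A,A)}\rv=\lv\Lambda(A)\rv\prod_{i=1}^r\bigl(\ell^{\sum_{j=i}^r a_j}-\ell^{\sum_{j=i}^r\min\{a_i-1,a_j\}}\bigr)$; dividing $\lv\Inj{(A,B)}\rv$ by this, the factor $\lv\Lambda(A)\rv$ cancels and one is left with exactly the claimed product for $\sub{(A,B)}$.

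The main obstacle is the enumeration step: one must check carefully that the count of admissible $v_i$ really is independent of the previously chosen $v_1,\ldots,v_{i-1}$, which is exactly where the hypothesis $a_1\geq\cdots\geq a_r$ enters—it forces the already-constructed socle vectors $w_1,\ldots,w_{i-1}$ to lie in $\ell^{a_i-1}B$, so that precisely $\ell^{i-1}\lv B[\ell^{a_i-1}]\rv$ values of $v_i$ are excluded. The subsequent rearrangement of the product, and the verification that it correctly evaluates to $0$ when $A$ does not inject into $B$, is routine exponent bookkeeping that nonetheless has to be done with care in the degenerate cases.
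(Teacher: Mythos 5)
Your proof is correct. The paper itself gives no argument for Proposition \ref{inj} --- it only points to Theorem~8 of Delaunay--Jouhet for an equivalent (differently shaped) formula --- so there is no authorial proof to compare against; your greedy enumeration supplies a complete, self-contained derivation. The key points all check out: injectivity of $\phi$ is detected on the socle, so it is equivalent to linear independence of the $w_i=\ell^{a_i-1}\phi(e_i)$; the ordering $a_1\geq\cdots\geq a_r$ forces $\Span_{\F_\ell}(w_1,\ldots,w_{i-1})$ into the image $\ell^{a_i-1}B\cap B[\ell]$ of $v\mapsto\ell^{a_i-1}v$ on $B[\ell^{a_i}]$, so the count of admissible $v_i$ is $\lv B[\ell^{a_i}]\rv-\ell^{i-1}\lv B[\ell^{a_i-1}]\rv$ independently of prior choices; the count of alternating forms and the identity $\lv\Inj(A,B)\rv=\lv\Aut A\rv\cdot\sub(A,B)$ are both standard; and one checks, as you indicate, that the first non-positive factor in the product (if any) is exactly zero, so the formula also returns $0$ precisely when $A$ does not embed in $B$. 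One small imprecision: the justification ``$b_j\geq a_j\geq a_i$ for $j<i$ whenever the $i$th factor is nonzero'' is not literally right (a single nonzero factor does not force $b_j\geq a_j$ for earlier $j$); the rearrangement should be carried out under the hypothesis that $\Inj(A,B)\neq\emptyset$, with the degenerate case handled separately by noting that both sides vanish --- which is exactly the bookkeeping you flag at the end, so this does not affect correctness.
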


Before stating some consequences of \hyperref[inj]{Proposition~\ref*{inj}}, we need a bit more notation.

\begin{Not23} \label{Not23}
For any $A\in\mathcal{G}$ and any $i\in\Z^{\geq0}$, let
\[
A_{\oplus i}
:=A\oplus\overbrace{\lp\Z/\ell\rp\oplus\cdots\oplus\lp\Z/\ell\rp}^{i\text{ times}}.
\]
If $i\geq1$, let
\[
\rank_{\ell^i}{A}:=\dim_{\F_\ell}{\lp\ell^{i-1}A/\ell^iA\rp}.
\]
We will abbreviate $\rank_\ell{A}$ by $\rank{A}$.
\end{Not23}
\noindent As an example, consider the group $A=\Z/\ell^4\oplus\Z/\ell^4\oplus\Z/\ell$.
Then $\rank_{\ell^5}{A}=0$, $\rank_{\ell^4}{A}=\rank_{\ell^3}{A}=\rank_{\ell^2}{A}=2$, and $\rank{A}=3$.
We will use the following three lemmas in the proofs of our main results.

\begin{subg} \label{subg}
Suppose $A,B\in\mathcal{G}$.
If $\rank{B}-\rank{A}=i\geq0$, then
\[
\sub{\lp A,A_{\oplus i}\rp}\cdot\sub{\lp A_{\oplus i},B\rp}=\sub{(A,B)}.
\]
\begin{proof}
Computation following from \hyperref[inj]{Proposition~\ref*{inj}}.
\end{proof}
\end{subg}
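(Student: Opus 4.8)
The plan is to avoid expanding the (admittedly unwieldy) formula of \hyperref[inj]{Proposition~\ref*{inj}} and to argue by a double count. Set $r=\rank A$, so that $\rank A_{\oplus i}=\rank B=r+i$, and let $\mathcal{S}$ denote the set of pairs $(A',A'')$ of subgroups of $B$ with $A'\leq A''$, $A'\simeq A$, and $A''\simeq A_{\oplus i}$. Grouping $\mathcal{S}$ by its second coordinate is immediate: there are $\sub(A_{\oplus i},B)$ choices for $A''$, and inside each fixed copy of $A_{\oplus i}$ in $B$ there are $\sub(A,A_{\oplus i})$ subgroups isomorphic to $A$, so $\lv\mathcal{S}\rv=\sub(A_{\oplus i},B)\cdot\sub(A,A_{\oplus i})$. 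Everything then comes down to the claim that for each subgroup $A'\leq B$ with $A'\simeq A$ there is \emph{exactly one} $A''$ with $A'\leq A''\leq B$ and $A''\simeq A_{\oplus i}$; grouping $\mathcal{S}$ by its first coordinate would then yield $\lv\mathcal{S}\rv=\sub(A,B)$, which is the lemma.

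To establish the claim I would write $B[\ell]$ for the $\ell$-torsion subgroup of $B$, an $\F_\ell$-vector space of dimension $\rank B=r+i$, and note that $A'\cap B[\ell]=A'[\ell]$ has dimension $\rank A'=r$. For existence, choose an $\F_\ell$-complement $V$ of $A'[\ell]$ inside $B[\ell]$; then $A'\cap V=A'[\ell]\cap V=0$ (as $V\leq B[\ell]$), and since $A'[\ell]\leq A'$ we get $A'+B[\ell]=A'+V=A'\oplus V\simeq A\oplus(\Z/\ell)^{i}=A_{\oplus i}$, a subgroup of $B$. For uniqueness, suppose $A'\leq A''\leq B$ with $A''\simeq A_{\oplus i}$; then $A''[\ell]=A''\cap B[\ell]$ is a subspace of $B[\ell]$ of dimension $\rank A_{\oplus i}=r+i=\dim B[\ell]$, so $A''[\ell]=B[\ell]$, whence $B[\ell]\leq A''$ and $A'+B[\ell]\leq A''$; comparing orders via $\lv A'+B[\ell]\rv=\lv A'\rv\cdot\lv B[\ell]\rv/\lv A'[\ell]\rv=\lv A\rv\ell^{i}=\lv A_{\oplus i}\rv=\lv A''\rv$ then forces $A''=A'+B[\ell]$.

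The only real obstacle is this uniqueness assertion; the rest is bookkeeping with ranks and orders, and the case $i=0$ is trivial since $A_{\oplus 0}=A$ and $\sub(A,A)=1$. Of course one could instead prove the identity by brute force from \hyperref[inj]{Proposition~\ref*{inj}}: writing $A=\bigoplus_{k=1}^{r}\Z/\ell^{a_k}$ and substituting the three instances of the $\sub$ formula, the factors indexed by the $k$ with $a_k\geq2$ each contribute $\ell^{i}$ to $\sub(A,A_{\oplus i})$ and cancel against matching factors of $\sub(A_{\oplus i},B)$ relative to those of $\sub(A,B)$, while the factors with $a_k=1$, together with the $i$ new parts equal to $1$ in $A_{\oplus i}$, telescope. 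The double count above is shorter and more transparent, so that is the route I would take.
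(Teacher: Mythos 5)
Your double-counting argument is correct, and it is a genuinely different route from the paper, whose proof of this lemma (like those of Lemmas~\ref{subg2} and~\ref{subg2.5}) is a direct computation with the explicit formula of \hyperref[inj]{Proposition~\ref*{inj}}. The heart of your version is the structural fact that every subgroup $A'\leq B$ with $A'\simeq A$ lies in exactly one subgroup isomorphic to $A_{\oplus i}$, namely $A'+B[\ell]$, and your verification of this is sound: existence via a complement $V$ of $A'[\ell]$ in $B[\ell]$ (giving $A'+B[\ell]=A'\oplus V\simeq A_{\oplus i}$), and uniqueness because $\rank A_{\oplus i}=\rank B$ forces any such $A''$ to contain all of $B[\ell]$, after which the order count pins it down. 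Counting the pairs $(A',A'')$ both ways then gives the identity, with the hypothesis $\rank B-\rank A=i$ used exactly where it should be. What your approach buys is independence from \hyperref[inj]{Proposition~\ref*{inj}} (and hence from the choice of formula for $\sub$), plus an explanation of \emph{why} the multiplicativity holds; what the paper's approach buys is uniformity, since the same mechanical substitution into the formula also disposes of Lemmas~\ref{subg2} and~\ref{subg2.5}, for which a clean bijective argument is less immediate. One small stylistic point: when you group $\mathcal{S}$ by its second coordinate, it is worth saying explicitly that the count $\sub(A,A'')$ depends only on the isomorphism class of $A''$, which is what lets you factor out $\sub(A,A_{\oplus i})$.
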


\begin{subg2} \label{subg2}
Suppose $A,B\in\mathcal{G}$ and $\rank{A}=\rank{B}$.
If
\[
j\leq\max{\lb i\mid\rank_{\ell^i}{A}=\rank{A}\rb},
\]
then
\[
\sub{\lp A\oplus\Z/\ell^j,B\oplus\Z/\ell^j\rp}
=\sub{(A,B)}.
\]
\begin{proof}
Computation following from \hyperref[inj]{Proposition~\ref*{inj}}.
\end{proof}
\end{subg2}

\begin{subg2.5} \label{subg2.5}
Suppose $A\in\mathcal{G}$.
If $i\in\Z^{\geq0}$, $\rank{A}=r$, and $\bigoplus_{j=1}^r{\lp\Z/\ell^i\rp}\leq A$, then
\[
\sub{\lp\bigoplus_{j=1}^r{\lp\Z/\ell^i\rp},A\rp}=1.
\]
\end{subg2.5}
\begin{proof}
Computation following from \hyperref[inj]{Proposition~\ref*{inj}}.
\end{proof}

We now have the tools to prove Theorems~\ref{temp} and~\ref{subg3}.
For any $A,C\in\mathcal{G}$, \hyperref[temp]{Theorem~\ref*{temp}} concerns the case where $\rank{A}<\rank{C}$, and \hyperref[subg3]{Theorem~\ref*{subg3}} concerns the case where $\rank{A}=\rank{C}$.

\begin{temp} \label{temp}
Suppose that $A,C\in\mathcal{G}$ and $\rank{A}<\rank{C}$.
If there exists $k\in\Z^{>0}$ and $B\in\mathcal{G}$ such that $A\leq B<C$, $\rank{B}=\rank{A}$, and $B_{\oplus k}=C$, then $S(A,C)=S(A,B)\cdot S(B,C)$.
Otherwise, $S(A,C)=0$.
\end{temp}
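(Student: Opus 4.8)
The plan is to bypass the chain definition of $S$ entirely and compute $S(A,C)$ through the subgroup lattice, using \hyperref[amalgam]{Remark~\ref*{amalgam}} together with Hall's theorem~\cite{Hall2}. Since $S(A,C)=\sum_{E\leq C,\,E\cong A}\mu_C(E,C)$ and the interval $[E,C]$ in $\mathcal{P}_C$ is isomorphic to $\mathcal{P}_{C/E}$ by the correspondence theorem, we have $\mu_C(E,C)=\mu_{C/E}(1,C/E)$, which by~\cite{Hall2} vanishes unless $C/E$ is elementary abelian and otherwise equals $(-1)^m\ell^{\binom{m}{2}}$ with $\ell^m=|C/E|$. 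As $C/E$ is elementary abelian exactly when $\ell C\subseteq E$, this gives, for every pair in $\mathcal{G}$,
\[
S(X,Y)=(-1)^{m}\ell^{\binom{m}{2}}N(X,Y),\qquad \ell^{m}=|Y|/|X|,\quad N(X,Y):=\#\{E\leq Y\mid E\cong X,\ \ell Y\subseteq E\}.
\]
So it suffices to prove the corresponding multiplicativity for the integers $N$.

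If $A\nleq C$ then $S(A,C)=0$ and no $B$ as in the statement exists, so assume $A\leq C$ and write $A=\bigoplus_{i=1}^{r}\Z/\ell^{a_i}$, $C=\bigoplus_{i=1}^{s}\Z/\ell^{c_i}$ with weakly decreasing exponents and $r<s$. Since $\ell C\subseteq E\cong A$ forces $\rank_{\ell^2}C=\rank(\ell C)\leq\rank A=r$, in the case $\rank_{\ell^2}C>r$ we get $N(A,C)=0$, hence $S(A,C)=0$; and no valid $B$ can exist there, because $B_{\oplus k}=C$ forces $\rank_{\ell^2}C\leq\rank B=r$. This settles the ``otherwise'' alternative. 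In the complementary case $\rank_{\ell^2}C\leq r$ we have $c_{r+1}=\dots=c_s=1$, and with $B^{\ast}:=\bigoplus_{i=1}^{r}\Z/\ell^{c_i}$ and $k:=s-r$ one gets $A\leq B^{\ast}<C$, $\rank B^{\ast}=\rank A$, and $B^{\ast}_{\oplus k}=C$, so a valid $B$ exists and is forced to equal $B^{\ast}$. It then remains to prove $N(A,C)\cdot\ell^{m'k}=N(A,B^{\ast})\cdot N(B^{\ast},C)$ with $m':=\log_\ell(|B^{\ast}|/|A|)$, for then $S(A,C)=S(A,B^{\ast})S(B^{\ast},C)$ follows from the displayed formula and $\binom{m'}{2}+\binom{k}{2}+m'k=\binom{m'+k}{2}$.

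I would prove this identity by double counting the pairs $(E,E'')$ with $E\leq E''\leq C$, $E\cong A$, $E''\cong B^{\ast}$, and $\ell C\subseteq E$. The crucial structural input is that any $E''\cong B^{\ast}$ inside $C$ satisfies $\ell^t E''=\ell^t C$ for all $t\geq 1$ (an order comparison using $c_{r+1}=\dots=c_s=1$); hence $\ell C\subseteq E''$, $E''$ is pure in $C$ and therefore a direct summand with complement $\cong C/E''\cong(\Z/\ell)^k$, and conversely any subgroup $P$ with $\ell C\subseteq P\subseteq C$, $|P|=|B^{\ast}|$, $\ell P=\ell C$ is forced to be $\cong B^{\ast}$ (by purity and cancellation $P\oplus(\Z/\ell)^k\cong B^{\ast}\oplus(\Z/\ell)^k$). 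In particular $N(B^{\ast},C)=\sub(B^{\ast},C)$, and summing over $E''$ first (inside each $E''\cong B^{\ast}$ one has $\ell C=\ell E''$, so the admissible $E$ are counted by $N(A,B^{\ast})$) yields $N(B^{\ast},C)\cdot N(A,B^{\ast})$ pairs. Summing over $E$ first, the identity reduces to showing that for each fixed admissible $E$ the number of $E''$ with $E\leq E''\leq C$ and $E''\cong B^{\ast}$ is the constant $\ell^{m'k}$.

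This last count is the heart of the matter. Fix $E\cong A$ with $\ell C\subseteq E$; by the previous paragraph an overgroup $E''$ with $|E''|=|B^{\ast}|$ is $\cong B^{\ast}$ iff $\ell E''=\ell C$. Consider the $\F_\ell$-linear map $\psi\colon C/E\to \ell C/\ell E$, $\bar c\mapsto\overline{\ell c}$ (well defined, with both sides killed by $\ell$ because $\ell C\subseteq E$), which is surjective; for $V=E''/E$ one has $\ell E''/\ell E=\psi(V)$, so the condition becomes that $\psi$ maps the $m'$-dimensional subspace $V$ onto $\ell C/\ell E$. A short order computation, again using $c_{r+1}=\dots=c_s=1$ and $E\cong A$, gives $\dim_{\F_\ell}(\ell C/\ell E)=m'$ and $\dim_{\F_\ell}(C/E)=m'+k$, so $\ker\psi$ is $k$-dimensional; an $m'$-dimensional subspace surjects onto $\ell C/\ell E$ precisely when it is a complement to $\ker\psi$, and the number of complements to a $k$-dimensional subspace in an $(m'+k)$-dimensional $\F_\ell$-space is exactly $\ell^{m'k}$. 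The main obstacle is precisely this paragraph: identifying $\dim(\ell C/\ell E)$ with $m'$ is what pins the count down to $\ell^{m'k}$ rather than to a larger Gaussian binomial coefficient, and everything else is bookkeeping with the displayed formula.
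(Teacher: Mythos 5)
Your proof is correct, but it takes a genuinely different route from the paper's. The paper works directly with the chain-sum definition of $S$: it pairs off chains via the insertion/deletion of an intermediate group $\lp A_{j'-1}\rp_{\oplus k_0}$ (using Lemma~\ref{subg}) to exhibit massive cancellation, reduces to the recursion $S(A,C)=-\sum_{B_0}\sub{(A,B_0)}\,S(B_0,C)$ over intermediate groups of rank equal to $\rank{A}$, and then isolates the maximal such $B_0$. You instead combine \hyperref[amalgam]{Remark~\ref*{amalgam}} with Hall's theorem~\cite{Hall2} and the interval isomorphism $[E,C]\simeq\mathcal{P}_{C/E}$ to get the closed form $S(X,Y)=(-1)^m\ell^{\binom{m}{2}}N(X,Y)$ with $N$ counting subgroups isomorphic to $X$ containing $\ell Y$, and then reduce the theorem to the identity $N(A,C)\,\ell^{m'k}=N(A,B^*)\,N(B^*,C)$, which you prove by double counting; I checked the key steps (the purity/cancellation characterization of copies of $B^*$ in $C$, the dimension count $\dim(\ell C/\ell E)=m'$, and the $\ell^{m'k}$ complement count) and they are all sound. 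What each approach buys: yours depends on two external inputs the paper only cites (Hall's 1934 formula and Lemma~2.2 of~\cite{HIO} behind \hyperref[amalgam]{Remark~\ref*{amalgam}}), but in exchange the closed form $S=\pm\ell^{\binom{m}{2}}N$ immediately yields \hyperref[subg3]{Theorem~\ref*{subg3}} and \hyperref[final]{Corollary~\ref*{final}} as well (nonvanishing of $N(A,C)$ is exactly the existence of an embedding with elementary abelian cokernel), so it is globally shorter and arguably sharper; the paper's argument is self-contained within the chain combinatorics (modulo \hyperref[inj]{Proposition~\ref*{inj}}) and makes the cancellation structure of the chain sum explicit, which is the mechanism reused in the proof of \hyperref[subg3]{Theorem~\ref*{subg3}}.
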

\begin{proof}
By \hyperref[Not21]{Definition~\ref*{Not21}}, we know $S(A,C)$ is a sum of products of subgroup data---one summand for every $A$-chain with maximum $C$.
Choose some such chain, say $\mathfrak{C}=\lb A_i\rb_{i=1}^j$, where $j\in\Z^{>0}$ and $A=A_0<\cdots<A_j=C$.
Consider the set
\[
M_{\mathfrak{C}}
=\lb j_0\in\lb1,\ldots,j\rb
\mid\text{there is no }k_{j_0}\in\Z^{>0}\text{ such that }A_{j_0}=A_{\oplus k_{j^{\prime}}}\rb.
\]
If $M_{\mathfrak{C}}$ is empty, then the theorem is trivially true since there is some $k\in\Z^{>0}$ such that $C=A_{\oplus k}$.
Thus, suppose it is not empty and let $j^\prime=\min{\lp M_{\mathfrak{C}}\rp}$.

There are two possibilities for the ranks of $A_{j^{\prime}}$ and $A_{j^{\prime}-1}$: either $\rank{\lp A_{j^{\prime}-1}\rp}=\rank{\lp A_{j^{\prime}}\rp}$ or $\rank{\lp A_{j^{\prime}-1}\rp}<\rank{\lp A_{j^{\prime}}\rp}$.
It turns out that summands in the former case cancel out those in the latter.
Indeed, if $\rank{\lp A_{j^{\prime}}\rp}-\rank{\lp A_{j^{\prime}-1}\rp}=k_0>0$, then we know by \hyperref[subg]{Lemma~\ref*{subg}} that
\[
\sub{\lp A_{j^{\prime}-1},\lp A_{j^{\prime}-1}\rp_{\oplus k_0}\rp}
\cdot\sub{\lp\lp A_{j^{\prime}-1}\rp_{\oplus k_0},A_{j^{\prime}}\rp}
=\sub{\lp A_{j^{\prime}-1},A_{j^{\prime}}\rp}.
\]
Thus, $\sub{\lp\mathfrak{C}\rp}$ cancels with another summand in $S(A,B)$, one associated to a chain that is longer than $\mathfrak{C}$ by one subgroup; namely, the chain
\[ \tag{$\mathfrak{C}^\prime$}
A_1
<\cdots
<A_{j^{\prime}-1}
<\lp A_{j^{\prime}-1}\rp_{\oplus k_0}
<A_{j^{\prime}}
<\cdots
<A_j=B.
\]
In contrast to $\mathfrak{C}$, the first subgroup in $\mathfrak{C}^\prime$ that is not of the form $A_{\oplus k}$ for any $k\in\Z^{\geq0}$ has the same rank as its predecessor (ie, $\rank{\lp\lp A_{j^{\prime}-1}\rp_{\oplus k_0}\rp}=\rank{\lp A_{j^{\prime}}\rp}$).

Now suppose that $A_{j^{\prime}}$ and $A_{j^\prime-1}$ had satisfied the other possibility; ie, that $\rank{\lp A_{j^{\prime}-1}\rp}=\rank{\lp A_{j^{\prime}}\rp}$.
If $j^{\prime}>1$, then the summand cancels with a summand whose chain is one shorter.
Specifically, we know by \hyperref[subg]{Lemma~\ref*{subg}} that it cancels with the summand associated to the chain $\mathfrak{C}\setminus\lb A_{j^\prime-1}\rb$.
Thus, the only summands of $S(A,B)$ that that remain are those associated to chains with minimum element the same rank as $A$.
Using this fact, we can write
\[
S(A,C)
=-\sum_{\substack{
B_0\in\mathcal{G},A<B_0<C,\\
\rank{B_0}=\rank{A}}}
{\sub{\lp A,B_0\rp}\cdot S\lp B_0,C\rp}.
\]
Note that if $\lb B_0\in\mathcal{G}\mid A<B_0<C,\rank{B_0}=\rank{A}\rb=\emptyset$, then the above sum vanishes and we are done.
Thus, suppose it is not empty and let
\[
B=\max{\lb B_0\in\mathcal{G}\mid A<B_0<C,\rank{B_0}=\rank{A}\rb}.
\]
We can repeat the argument above to see that
\[
S(A,C)
=S(A,B)\cdot S(B,C).
\]
If there is some $k\in\Z^{>0}$ such that $C=B_{\oplus k}$, then we are done.
If not, then the argument from the previous paragraphs and the definition of $B$ imply that $S(B,C)=0$, completing the proof.
\end{proof}

In the light of \hyperref[temp]{Theorem~\ref*{temp}}, we now address $S(A,C)$ in the case where $\rank{A}=\rank{C}$.

\begin{subg3} \label{subg3}
Suppose that $A,C\in\mathcal{G}$, that $\rank{A}=\rank{C}=r$, and that there does not exist an injection $\iota:A\hookrightarrow C$ such that $\coker{(\iota)}$ is elementary abelian.
Then $S(A,C)=0$.
\end{subg3}
\begin{proof}
Suppose that $A<C$ (otherwise the result is trivial).
We will induct on $r$.
To begin, suppose that $r=1$, and define $a,c\in\Z^{\geq0}$ by $\ell^a=\lv A\rv$ and $\ell^c=\lv C\rv$.
Since $A$ and $C$ are cyclic, we see that for any $i\in\lb1,\ldots,c-a\rb$,
\[
\lv\lb A\text{-chains }\mathfrak{C}\mid\max{\mathfrak{C}}=C,\lv\mathfrak{C}\rv=i\rb\rv
=\binom{c-a-1}{i-1}.
\]
Moreover, the fact that $A$ and $C$ are cyclic also implies that $\sub{\lp\mathfrak{C}\rp}=(-1)^i$ for any $A$-chain in the above set.
By assumption, we know that $c-a-1>0$, so $\sum_{i=1}^{c-a}{(-1)^i\binom{c-a-1}{i-1}}=0$.
This completes the base case.

We split the general case into three cases.
For the first case, suppose that $\exp{A}=\exp{C}$.
For any $B\in\mathcal{G}$, let $\overline{B}$ denote $B/\langle b\rangle$, where $b\in B$ is any element of order $\exp{B}$.
Similarly, if $\mathfrak{C}$ is a $B$-chain, we define $\overline{\mathfrak{C}}$ to be $\lb\overline{D}\mid D\in\mathfrak{C}\rb$.
Now, since $\exp{A}=\exp{C}$, we see that $\lb A\text{-chains }\mathfrak{C}\mid\max{\mathfrak{C}}=C\rb$ is in bijection with $\lb\overline{A}\text{-chains }\mathfrak{C}\mid\max{\mathfrak{C}}=\overline{C}\rb$ under the map $\mathfrak{C}\mapsto\overline{\mathfrak{C}}$.
Moreover, given any $A$-chain $\mathfrak{C}$ with $\max{\mathfrak{C}}=C$, \hyperref[inj]{Proposition~\ref*{inj}} implies that $\sub{\lp\mathfrak{C}\rp}=K\cdot\sub{\lp\overline{\mathfrak{C}}\rp}$, where $K$ is a constant depending only on $A$ and $C$.
The result now follows by induction.

For the second case, suppose that $\ell\cdot\exp{A}=\exp{C}$.
For any $A$-chain $C$ with $\max{\mathfrak{C}}=C$, let $\widehat{\mathfrak{C}}$ denote $\min{\lb B\in\mathfrak{C}\mid\exp{B}=\exp{C}\rb}$.
For any $B\in\mathcal{G}$ such that $A<B\leq C$ and $\exp{B}>\exp{A}$, let $B_C=B/\lp\ell^{-1}\exp{C}\rp B$.
For any such $B$ with $B_C\neq A$, we can partition the set $\lb A\text{-chains }\mathfrak{C}\mid\max{\mathfrak{C}=C,\widehat{\mathfrak{C}}=B}\rb$ into two subsets: those chains that contain $B_C$ and those that do not.
We remark that these two subsets are in bijection under the following map: if an $A$-chain does not contain $B_C$, then add it.
The inverse to this map is simply the deletion of $B_C$ from any $A$-chain.
Now, by Lemmas~\ref{subg2} and~\ref{subg2.5}, we know that $B$ has exactly one subgroup isomorphic to $B_C$.
Thus, for any $A_0$ such that $A\leq A_0<B_C$, we know that
\[
\sub{\lp A_0,B_C\rp}\sub{\lp B_C,B\rp}=\sub{\lp A_0,B\rp}.
\]
But this means that any summand associated to a chain in the first subset cancels with the summand associated to the image of the chain under the above bijection.
Thus,
\[
S(A,C)=\sum_{\substack{A\text{-chains }\mathfrak{C}\\
\lp\widehat{\mathfrak{C}}\rp_C=A
}}{\sub{\lp\mathfrak{C}\rp}}.
\]
Now, for any $B\in\mathcal{G}$ with $A<B\leq C$, $\exp{B}>\exp{A}$, and $B_C=A$, note that
\[
\sum_{\substack{A\text{-chains }\mathfrak{C}\\
\max{\mathfrak{C}}=C\\
\widehat{\mathfrak{C}}=B
}}{\sub{\lp\mathfrak{C}\rp}}
=S(B,C)\cdot
\hspace{-40px}\sum_{\substack{
A\text{-chains }\mathfrak{C}\\
\max{\mathfrak{C}}=B\\
\max{\lb\exp{D}\mid D\in\mathfrak{C}\setminus\{B\}\rb}=\exp{A}
}}{\hspace{-30px}\sub{\lp\mathfrak{C}\rp}}.
\]
But $S(B,C)=0$ for all such $B$, by the argument of the previous paragraph.
Thus,
\[
S(A,C)=\sum_{\substack{A\text{-chains }\mathfrak{C}\\
\lp\widehat{\mathfrak{C}}\rp_C=A
}}{\sub{\lp\mathfrak{C}\rp}}
=\sum_{\substack{A<B\leq C\\
\exp{B}>\exp{A}\\
B_C=A
}}{\sum_{\substack{A\text{-chains }\mathfrak{C}\\
\max{\mathfrak{C}}=C\\
\widehat{\mathfrak{C}}=B
}}{\sub{\lp\mathfrak{C}\rp}}}
=0,
\]
completing this case.

Finally, consider the case where $\ell\cdot\exp{A}<\exp{C}$.
As in the previous case, we have that 
\[
S(A,C)=\sum_{\substack{A\text{-chains }\mathfrak{C}\\
\lp\widehat{\mathfrak{C}}\rp_C=A
}}{\sub{\lp\mathfrak{C}\rp}}.
\]
The difference in this case is that if $\mathfrak{C}$ is an $A$-chain with $\max{\mathfrak{C}}=C$, then it is impossible that $\lp\widehat{\mathfrak{C}}\rp_C=A$, so the proof is complete.

\end{proof}

Theorems~\ref{temp} and~\ref{subg3} immediately imply the following corollary.

\begin{final} \label{final}
If $A,C\in\mathcal{G}$, then $S(A,C)=0$ unless there exists an injection $\iota:A\hookrightarrow B$ with $\coker{(\iota)}$ elementary abelian.
\end{final}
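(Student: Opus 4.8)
The plan is to deduce the corollary directly from Theorems~\ref{temp} and~\ref{subg3} by casing on the relationship between $\rank A$ and $\rank C$. If $A\nleq C$, then $S(A,C)=0$ by Definition~\ref{Not21} and there is nothing to prove; so we may assume $A\leq C$, which forces $\rank A\leq\rank C$ (a subgroup of a finite abelian $\ell$-group has rank at most that of the group). This leaves two cases, and in each we assume $S(A,C)\neq 0$ and produce the desired injection.

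The first case is $\rank A=\rank C$. Here Theorem~\ref{subg3}, read contrapositively, is exactly the statement we want: since $S(A,C)\neq 0$, there must exist an injection $\iota:A\hookrightarrow C$ with $\coker(\iota)$ elementary abelian. So this case requires no further work.

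The substantive case is $\rank A<\rank C$. Applying Theorem~\ref{temp} and using $S(A,C)\neq 0$, we cannot be in the ``Otherwise'' branch, so there exist $k\in\Z^{>0}$ and $B\in\mathcal{G}$ with $A\leq B<C$, $\rank B=\rank A$, $B_{\oplus k}=C$, and $S(A,C)=S(A,B)\cdot S(B,C)$. In particular $S(A,B)\neq 0$, and since $\rank A=\rank B$ the previous case (i.e.\ Theorem~\ref{subg3}) yields an injection $\iota_1:A\hookrightarrow B$ with $\coker(\iota_1)$ elementary abelian. Letting $j:B\hookrightarrow B_{\oplus k}=C$ be the inclusion of $B$ as the first direct summand and setting $\iota=j\circ\iota_1$, we get $\coker(\iota)\cong\coker(\iota_1)\oplus(\Z/\ell)^{k}$, a direct sum of two elementary abelian groups, hence elementary abelian.

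The one place that needs care — and it is the main (mild) obstacle — is the final identification: an extension of an elementary abelian group by an elementary abelian group need not be elementary abelian (as $\Z/\ell^2$ shows), so one cannot conclude from the exact sequence $0\to\coker(\iota_1)\to\coker(\iota)\to(\Z/\ell)^k\to 0$ alone. The resolution is precisely that $C$ splits as $B\oplus(\Z/\ell)^k$ with $\iota_1(A)\subseteq B$, so $\coker(\iota)$ is literally a direct sum and no extension problem arises. Everything else is bookkeeping with the two theorems.
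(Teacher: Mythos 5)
Your proposal is correct and takes the same route the paper intends: the paper offers no written proof beyond asserting that Theorems~\ref{temp} and~\ref{subg3} ``immediately imply'' the corollary, and your case split on $\rank A$ versus $\rank C$, with the contrapositive of Theorem~\ref{subg3} in the equal-rank case and Theorem~\ref{temp} reducing the unequal-rank case to it, is exactly that deduction. Your observation that the splitting $C=B_{\oplus k}$ makes $\coker(\iota)$ a literal direct sum (so no extension problem arises) is the right way to close the one step the paper leaves implicit.
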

\noindent Finally, \hyperref[amalgam]{Remark~\ref*{amalgam}} and \hyperref[final]{Corollary~\ref*{final}} imply the following result.

\begin{trivialyes} \label{trivialyes}
If $A,C\in\mathcal{G}$ and $C$ has exactly one subgroup isomorphic to $A$, then $\mu_C(A,C)=0$ unless there exists some $\iota:A\hookrightarrow C$ with $\coker{(i)}$ elementary abelian.
\end{trivialyes}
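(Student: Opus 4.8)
The plan is to obtain the statement as a formal consequence of \hyperref[amalgam]{Remark~\ref*{amalgam}} and \hyperref[final]{Corollary~\ref*{final}}; no new combinatorics is needed, so the proof is short and the ``work'' is bookkeeping about subgroups versus isomorphism classes.

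First I would record what the hypothesis ``$C$ has exactly one subgroup isomorphic to $A$'' buys us. Since the number of such subgroups is positive, there is an injection $A\hookrightarrow C$, so $A\leq C$ in $\mathcal{G}$; this makes both $S(A,C)$ and $\mu_C(A,C)$ meaningful, where $\mu_C(A,C)$ denotes $\mu_C(B_0,C)$ for the unique subgroup $B_0\leq C$ with $B_0\simeq A$. Then I would feed this into the identity
\[
S(A,C)=\sum_{\substack{B\leq C\\B\simeq A}}\mu_C(B,C)
\]
of \hyperref[amalgam]{Remark~\ref*{amalgam}}. Because the indexing set $\lb B\leq C\mid B\simeq A\rb$ equals the singleton $\lb B_0\rb$, the right-hand side collapses to one term, and we conclude $S(A,C)=\mu_C(A,C)$.

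Second I would apply \hyperref[final]{Corollary~\ref*{final}}: $S(A,C)=0$ unless there is an injection $\iota:A\hookrightarrow C$ with $\coker(\iota)$ elementary abelian. (The auxiliary group in the statement of \hyperref[final]{Corollary~\ref*{final}} should be read as $C$: when $\rank A=\rank C$ this is \hyperref[subg3]{Theorem~\ref*{subg3}} directly, and when $\rank A<\rank C$, \hyperref[temp]{Theorem~\ref*{temp}} forces $C=B_{\oplus k}$ with $A\leq B$, $\rank B=\rank A$, and $S(A,B)\neq0$ whenever $S(A,C)\neq0$, so \hyperref[subg3]{Theorem~\ref*{subg3}} supplies some $j:A\hookrightarrow B$ with $B/j(A)$ elementary abelian, and then $B\hookrightarrow B_{\oplus k}=C$ precomposed by $j$ has cokernel $(B/j(A))\oplus(\Z/\ell)^k$, which is again elementary abelian.) Combining this with $S(A,C)=\mu_C(A,C)$ from the previous step gives $\mu_C(A,C)=0$ in exactly the asserted range of cases.

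The only point requiring care---what passes for the ``hard part''---is the degenerate-case bookkeeping: verifying that $\mu_C(A,C)$ is a legitimate symbol and, simultaneously, that ``$C$ has exactly one subgroup isomorphic to $A$'' is precisely the condition that collapses the sum in \hyperref[amalgam]{Remark~\ref*{amalgam}} to a single term. There is no further obstacle; all the genuine content lives in the proofs of Theorems~\ref{temp} and~\ref{subg3}, and this corollary simply transports their conclusion from $S$ on $\mathcal{G}$ to the classical M\"{o}bius function on $\mathcal{P}_C$.
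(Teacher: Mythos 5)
Your proposal is correct and matches the paper's own derivation: the paper obtains this corollary exactly by combining \hyperref[amalgam]{Remark~\ref*{amalgam}} (whose sum collapses to the single term $\mu_C(A,C)$ under the uniqueness hypothesis) with \hyperref[final]{Corollary~\ref*{final}}. Your extra parenthetical unpacking of how Theorems~\ref{temp} and~\ref{subg3} yield \hyperref[final]{Corollary~\ref*{final}} is accurate but not needed here, since the paper treats that corollary as already established.
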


\bibliography{mobius}
\bibliographystyle{amsalpha}

\end{document}